 \newtheorem{thm}{Theorem}[section]
 \newtheorem{lem}[thm]{Lemma}
 \theoremstyle{definition}
 \newtheorem{defn}[thm]{Definition}
 \theoremstyle{remark}
 \newtheorem{rem}[thm]{Remark}
 \numberwithin{equation}{section}
\begin{document}

\title[Eigenvalues of the negative $(p,q)$-Laplacian under a Steklov-like BC]{Eigenvalues of the negative $(p,q)$-Laplacian under a Steklov-like boundary condition}

\author[Lumini\c{t}a Barbu]{Lumini\c{t}a Barbu}
\address{ Ovidius University, Faculty of Mathematics and Computer Science,\\ 124 Mamaia Blvd,
 900527 Constan\c{t}a, Romania}
  \email{lbarbu@univ-ovidius.ro}

\author[Gheorghe Moro\c{s}anu]{Gheorghe Moro\c{s}anu}
\address{  Central European University, Department of Mathematics,\\ Nador u. 9, 1051 Budapest, Hungary }

\email {morosanug@ceu.edu}

\begin{abstract}
In this paper we consider in a bounded domain $\Omega \subset \mathbb{R}^N$ with smooth boundary an eigenvalue problem for the negative $(p,q)$-Laplacian 
with a Steklov type boundary condition, where $p\in (1,\infty)$, $q\in (2,\infty)$ and $p\neq q$. A full description of the set of eigenvalues of 
this problem is provided, thus essentially extending a recent result by Abreu and Madeira [1] related to the $(p,2)$-Laplacian.  
\end{abstract}

\subjclass{35J60, 35J92, 35P30.}
\keywords{eigenvalues, $(p,q)$-Laplacian, Steklov-like boundary condition, Sobolev space, Nehari manifold, variational methods.}
\maketitle

\section{ Introduction}
In this paper we investigate the eigenvalue problem
\begin{equation}\label{eq:1.1}
\left\{\begin{array}{l}
A u:=-\Delta_p u-\Delta_q u=\lambda a(x) \mid u\mid ^{q-2}u\ \ \mbox{in} ~ \Omega,\\[1mm]
\frac{\partial u}{\partial\nu_A}=\lambda b(x) \mid u\mid ^{q-2}u \hspace*{2.6cm}~ \mbox{on} ~ \partial\Omega,
\end{array}\right.
\tag{1.1}
\end{equation}
under the following hypotheses

\bigskip

$(H_{pq}) \ \ \ \ \ \ p\in (1, \infty),~  q\in (2, \infty),~ p\neq q$;

\bigskip

$(H_{\Omega}) \ \ \ \ \ \ \Omega\subset \mathbb{R}^N, ~N\geq 2$, is a bounded domain with smooth boundary $\partial \Omega$;

\bigskip

$(H_{ab}) \ \ \ \ \ \ a,b\in L^{\infty}(\Omega)$ are given nonnegative functions satisfying
\begin{equation}\label{eq:1.2}
 \int_\Omega a(x)~dx+\int_{\partial\Omega} b(x)~d\sigma >0.
 \tag{1.2}
\end{equation}
We have denoted
$$
\frac{\partial u}{\partial\nu_A}:=\big(\mid \nabla u\mid ^{p-2}+\mid \nabla u\mid ^{q-2}\big)\frac{\partial u}{\partial\nu},
$$
where $\nu$ is the unit outward normal to $\partial\Omega$. As usual $\Delta_p$ denotes the $p$-Laplacian, i.e.,
$\Delta_pu=div \, (|\nabla u|^{p-2}\nabla u)$. The operator $\Delta_p + \Delta q$,
called $(p,q)$-Laplacian, occurs in quantum field theory.

\bigskip

The solution $u$ of \eqref{eq:1.1} is understood in a weak sense, as an element of the Sobolev space $W:=W^{1,\max\{p,q\}}(\Omega)$ satisfying
equation $\eqref{eq:1.1}_1$ in the sense of distributions and $\eqref{eq:1.1}_2$ in the sense of traces. Using a Green type formula
(see \cite{CF}, p. 71) we can define the concept of an eigenvalue of our problem as follows:
\begin{defn}\label{def1}
$\lambda\in \mathbb{R}$ is an eigenvalue of problem \eqref{eq:1.1} if there exists $u_\lambda\in W \setminus \{0\}$ such that
\begin{equation}\label{eq:1.3}
\begin{split}
\int_\Omega \Big(\mid \nabla u_\lambda\mid ^{p-2}+&\mid \nabla u_\lambda\mid ^{q-2}\Big)\nabla u_\lambda \cdot \nabla v~dx \\
&=\lambda\Big(\int_\Omega a\mid  u_\lambda\mid ^{q-2} u_\lambda  v~dx+\int_{\partial\Omega} b \mid  u_\lambda\mid ^{q-2} u_\lambda  v~d\sigma\Big)~\forall~v\in W.
\end{split}
\tag{1.3}
\end{equation}
\end{defn}
Indeed, according to the mentioned Green type formula, $u\in W$ is a solution of \eqref{eq:1.1} if and only if it satisfies \eqref{eq:1.3}.

\bigskip

Our goal is to determine the set of all eigenvalues of problem \eqref{eq:1.1}. Fortunately we are able to offer a complete description of this set (see
Theorem 3.1 below). It is worth pointing out that this nice result is due to the fact that operator $A$ is nonhomogeneous ($p\neq q$). The homogeneous case
($p=q$) is more delicate. For example, if $p=q$, $a\equiv 1$ and $b\equiv 0$, then the eigenvalue set of the corresponding (Neumann type) problem is
fully known only if $p=q=2$; otherwise, i.e. if $p=q\in (1, \infty)\setminus \{ 2\}$, then it is only known that, as a consequence of the
Ljusternik-Schnirelman theory, there exists a sequence of positive eigenvalues of problem \eqref{eq:1.1} with $A=-2\Delta_p$ (see, e.g., \cite{Le}), but
this sequence may not constitute the whole eigenvalue set.

Note that the (nonhomogeneous) case
$$
p\in (1, \infty), \ q=2, \ p \neq q
$$
has been considered recently by Abreu and Madeira in \cite{AM}
where the reader can also find some useful historical comments. They assume weaker conditions on $a$ and $b$. In this paper we extend their result to
the case $q>2$ but we restrict ourselves to functions $a \in L^{\infty}(\Omega), \ b\in L^{\infty}(\partial \Omega)$ since assuming weaker regularity for these
functions leads to similar results without essential changes. Note that the case
$$
p\in (1,\infty), \ q\ge 2, \ p\neq q, \ a\equiv 1, \ b\equiv 0
$$
has been solved in three previous papers, \cite{MMih}, \cite{FMS}, \cite{MM}. All these previous contributions are particular cases of the main result of this
paper (Theorem 3.1).

\section{Preliminary results}

Our hypotheses $H_{pq}$, $(H_{\Omega})$, $(H_{ab})$ will be assumed throughout this paper. If we choose $v=u_\lambda$
in \eqref{eq:1.3} (see Definition~\ref{def1}) we observe that the eigenvalues of problem \eqref{eq:1.1} cannot be negative numbers.
It is also obvious that $\lambda_0=0$ is an eigenvalue of this problem and the corresponding eigenfunctions
are the nonzero constant functions. So any other eigenvalue belongs to $(0,\infty)$.

If we assume that $\lambda>0$ is an eigenvalue of problem \eqref{eq:1.1} and choose $v\equiv 1$ in \eqref{eq:1.3} we deduce that every eigenfunction
$u_{\lambda}$ corresponding to $\lambda$ satisfies the equation
\begin{equation}\label{eq:2.1}
\int_\Omega a\mid  u_\lambda\mid ^{q-2} u_\lambda  ~dx+\int_{\partial\Omega} b \mid  u_\lambda\mid ^{q-2} u_\lambda  ~d\sigma=0.
\tag{2.1}
\end{equation}
So all eigenfunctions corresponding to positive eigenvalues necessarily belong to the set
\[
\mathcal{C}:=\Big\{ u\in W;~\int_\Omega a\mid  u\mid ^{q-2} u  ~dx+\int_{\partial\Omega} b \mid  u\mid ^{q-2} u  ~d\sigma=0\Big\}.
\]
This is a symmetric cone and using the Lebesgue Dominated Convergence Theorem (see also \cite[Theorem 4.9] {Br}) we can see
that $\mathcal{C}$ is a weakly closed subset of $W$. In addition, $\mathcal{C}$ has nonzero elements. To show this, we first note that \eqref{eq:1.2}
implis that either $|\{x\in \Omega;~a(x)> 0\}|_N>0$ or $a=0$ a.e. in $\Omega$ and $|\{x\in \partial\Omega;~b(x)> 0\}|_{N-1}>0$,
where $|\cdot |_N$ and $|\cdot |_{N-1}$ denote the Lebesgue measures of the two sets. In the former case we choose $x_1, x_2\in \Omega,~ x_1\neq x_2$,
$r>0$, such that $B_r(x_1)\cap B_r(x_2)=\emptyset,~B_r(x_k)\subset \Omega$, $|\{x\in B_r(x_k);~a(x)> 0\}|_N>0,\, k=1,2$,
and consider the test functions $u_k: \Omega\rightarrow \mathbb{R}, \, k=1,2,$
\begin{equation*}
u_k(x)=
\left\{\begin{array}{l}
e^{-\frac{1}{r^2-\mid x-x_k\mid^2}},~\mbox{if}~x\in B_r(x_k),\\[1mm]
0, \hspace*{1.8cm}\mbox{otherwise.}
\end{array}\right.
\end{equation*}
Clearly $u_k\in W$, $k=1,2$. Denote
\[
\theta_k=\int_\Omega a  u_k ^{q-1}   ~dx+\int_{\partial\Omega} b  u_k ^{q-1}  ~d\sigma.
\]
Obviously $\theta_k>0$, $k=1,2$. Define $\sigma_k=\theta_k^{\frac{-1}{q-1}},~~ k=1,2$.
It is then easily seen that the function $v=\sigma_1 u_1-\sigma_2 u_2$ belongs to $\mathcal{C}\setminus \{0\}$. Of course, $tv\in \mathcal{C}$ for all
$t\in \mathbb{R}$. A similar construction can be used
in the later case, where restrictions of similar test functions to $B_r(x_k)\cap \partial \Omega$, $x_k\in \partial \Omega$, $k=1,2$, can be considered.

\begin{rem}\label{remark11}
If for some $\lambda>0$ $u\in W\setminus\{0\}$ satisfies the equation
\[
\int_\Omega \Big(\mid \nabla u\mid ^{p}+\mid \nabla u\mid ^{q}\Big)~dx=\lambda\Big(\int_\Omega a\mid  u\mid ^{q} ~dx+\int_{\partial\Omega} b
\mid  u\mid ^{q} ~d\sigma\Big),
\]
then $u$ cannot be a constant function (see \eqref{eq:1.2}) and so
\[
\int_\Omega a\mid  u\mid ^{q} ~dx+\int_{\partial\Omega} b \mid  u\mid ^{q} ~d\sigma> 0.
\]
Therefore, denoting $\Gamma_1(u):=\{x\in \Omega;~a(x)u(x)\neq 0\},~\Gamma_2(u):=\{x\in \partial\Omega;~b(x)u(x)\neq 0\}$, we see that
either $|\Gamma_1(u)|_N>0$ or $|\Gamma_2(u)|_{N-1}>0$.

Obviously $u_{\lambda}$ corresponding to any eigenvalue $\lambda >0$ cannot be a constant function (see \eqref{eq:1.3} with $v=u_\lambda$
and \eqref{eq:1.2}).
 \end{rem}

Now, for $r>1$ define the set
\[
\mathcal{C}_r:=\Big\{ u\in W^{1,r}(\Omega);~\int_\Omega a\mid  u\mid ^{r-2} u  ~dx+\int_{\partial\Omega} b \mid  u\mid ^{r-2} u  ~d\sigma=0\Big\}.
\]
Arguing as before, we infer that for all $r>0$ $\mathcal{C}_r$ is a symmetric, weakly closed (in $W^{1,r}(\Omega)$) cone, containing infinitely
many nonzero elements.

Note also that $\mathcal{C}=\mathcal{C}_q$ if $q>p$, otherwise (i.e., if $q<p$) $\mathcal{C}$ is a proper subset of $\mathcal{C}_q.$

\bigskip

Now let us define,
\[
\mathcal{C}_{1q}:=\mathcal{C}_{q}\cap\Big\{ u\in W^{1,q}(\Omega);\int_\Omega a\mid  u\mid ^{q} ~dx+\int_{\partial\Omega} b
\mid  u\mid ^{q} ~d\sigma=1\Big\}.
\]
This set is nonempty. Indeed, let us suppose that $|\{ x\in \Omega; \, a(x)>0 \}|_N >0$ and choose $v= \sigma_1u_1 - \sigma_2u_2$ as before. We have $v\in C_q$ and
$\int_{\Omega}a|v|^q \, dx >0$ so there exists a $t_* >0$ such that
\[
t_*^q \Big( \int_\Omega a\mid  v\mid ^{q} ~dx+\int_{\partial\Omega} b
\mid  v\mid ^{q} ~d\sigma \Big)=1 \, .
\]
Therefore $t_*v\in \mathcal{C}_{1q}$. A similar conclusion is obtained if $a=0$ a.e. in $\Omega$ but $| \{ x\in \partial \Omega; \, b(x)>0\} |_{N-1}>0$.

Consider the minimization problem
\begin{equation}\label{eq:2.2}
\underset{w\in\mathcal{C}_{1q}}{\inf }~J(w) \, ,
\tag{2.2}
\end{equation}
where $J: W^{1,q}(\Omega)\rightarrow \mathbb{R}$ is defined by  $J(w):=\int_\Omega \mid  \nabla w\mid ^{q} ~dx.$ Functional $J$ is positively
homogeneous of order $q$, convex and weakly lower semicontinuous. The next result states that $J$ attains its minimal value and this value is positive.

\begin{lem}\label{lema1}
For each $q>1$ there exists $u^{*}\in \mathcal{C}_{1q}$ such that $J(u^{*})=\underset{w\in\mathcal{C}_{1q}}{\inf }~J(w)>0.$
\end{lem}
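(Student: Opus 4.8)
The plan is to use the direct method of the calculus of variations on the minimization problem \eqref{eq:2.2}. First I would take a minimizing sequence $(w_n)\subset \mathcal{C}_{1q}$, so that $J(w_n)\to m:=\inf_{w\in\mathcal{C}_{1q}} J(w)\ge 0$. The key preliminary step is to establish coercivity of $J$ on $\mathcal{C}_{1q}$ with respect to the full $W^{1,q}(\Omega)$-norm; since $J$ only controls the gradient part $\|\nabla w\|_{L^q}$, this is not automatic and is the heart of the argument. I would argue by contradiction: if $(w_n)\subset\mathcal{C}_{1q}$ had $\|w_n\|_{W^{1,q}}\to\infty$ while $J(w_n)$ stayed bounded, then $\|\nabla w_n\|_{L^q}$ would be bounded, forcing $\|w_n\|_{L^q}\to\infty$. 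Normalizing, set $\tilde w_n := w_n/\|w_n\|_{W^{1,q}}$; then $\nabla \tilde w_n\to 0$ in $L^q$, and along a subsequence $\tilde w_n\rightharpoonup \tilde w$ in $W^{1,q}(\Omega)$ with $\tilde w_n\to\tilde w$ in $L^q(\Omega)$ and in $L^q(\partial\Omega)$ by the compact Sobolev and trace embeddings. Since the gradients go to zero, $\tilde w$ must be a nonzero constant $c$. But $\mathcal{C}_q$ is weakly closed, so $\tilde w\in\mathcal{C}_q$, i.e. $c$ satisfies $\int_\Omega a|c|^{q-2}c\,dx+\int_{\partial\Omega} b|c|^{q-2}c\,d\sigma=0$, which together with $c\neq 0$ and $a,b\ge 0$ forces $\int_\Omega a\,dx+\int_{\partial\Omega}b\,d\sigma=0$, contradicting \eqref{eq:1.2}. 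Hence $J$ is coercive on $\mathcal{C}_{1q}$.

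With coercivity in hand, the minimizing sequence $(w_n)$ is bounded in $W^{1,q}(\Omega)$, so a subsequence converges weakly to some $u^*\in W^{1,q}(\Omega)$, and by the compact embeddings, strongly in $L^q(\Omega)$ and $L^q(\partial\Omega)$. Passing to the limit in the two constraint integrals (using $|u_n|^{q-2}u_n\to|u^*|^{q-2}u^*$ in $L^{q'}$ by continuity of the Nemytskii map and strong $L^q$ convergence, plus the analogous statement on the boundary) shows $u^*\in\mathcal{C}_{1q}$; in particular $u^*$ is admissible, so $\mathcal{C}_{1q}\ne\emptyset$ is used here too. Weak lower semicontinuity of $J$ (it is convex and continuous, hence weakly l.s.c., as already noted before the lemma) gives $J(u^*)\le\liminf_n J(w_n)=m$, and since $u^*\in\mathcal{C}_{1q}$ we also have $J(u^*)\ge m$; therefore $J(u^*)=m$ and the infimum is attained.

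It remains to show $m>0$. Suppose $m=0$; then $J(u^*)=0$, so $\nabla u^*=0$ a.e. in the connected domain $\Omega$, whence $u^*$ is a constant. But $u^*\in\mathcal{C}_{1q}$ means $\int_\Omega a|u^*|^q\,dx+\int_{\partial\Omega}b|u^*|^q\,d\sigma=1>0$, which is impossible for a constant function in view of \eqref{eq:1.2} (a nonzero constant would force that normalization integral to equal a positive multiple of $\int_\Omega a\,dx+\int_{\partial\Omega}b\,d\sigma$ \emph{and} simultaneously $\int_\Omega a|u^*|^{q-2}u^*\,dx+\int_{\partial\Omega}b|u^*|^{q-2}u^*\,d\sigma=0$, contradicting positivity of the same quantity; a zero constant contradicts the normalization $=1$). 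Hence $m>0$, completing the proof.

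\textbf{Main obstacle.} The only non-routine point is the coercivity step: $J$ controls only the seminorm $\|\nabla\cdot\|_{L^q}$, so one genuinely needs to exploit that the constraint set $\mathcal{C}_{1q}$ excludes all nonzero constants — this is exactly where hypothesis \eqref{eq:1.2} enters, via the weak closedness of $\mathcal{C}_q$ and the compact trace embedding $W^{1,q}(\Omega)\hookrightarrow L^q(\partial\Omega)$. Everything else is a standard application of the direct method together with the compact Sobolev and trace embeddings (valid since $\partial\Omega$ is smooth) and the weak lower semicontinuity of the convex functional $J$.
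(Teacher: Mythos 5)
Your proposal is correct and follows essentially the same route as the paper: a minimizing sequence, a contradiction/normalization argument showing boundedness in $W^{1,q}(\Omega)$ (using that the only constant in $\mathcal{C}_q$ is zero, by \eqref{eq:1.2} and the weak closedness of $\mathcal{C}_q$), compact embeddings plus weak lower semicontinuity to pass to the limit, and the observation that $J(u^*)=0$ would force $u^*$ to be a constant, which is incompatible with membership in $\mathcal{C}_{1q}$. The only cosmetic difference is that you normalize by the full $W^{1,q}$-norm while the paper normalizes by $\|u_n\|_{L^q(\Omega)}$; both lead to the same contradiction.
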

\begin{proof}
It is well-known that functional $J$ is of class $C^1$ on $W^{1,q}(\Omega)$ and obviously $J$ is bounded below. Let $(u_n)\subset \mathcal{C}_{1q}$
be a minimizing sequence for $J$, i. e.,
$$
J(u_n)\rightarrow \underset{w\in\mathcal{C}_{1q}}{\inf }~J(w):=\sigma.
$$
We can prove that $(u_n)$ is bounded in $W^{1,q}(\Omega)$. Assume the contrary, that there exists a subsequence of $(u_n)$, again
denoted $(u_n)$, such that $\parallel u_n\parallel_{L^q(\Omega)}\rightarrow\infty$ as $n\rightarrow\infty.$ Define
$$
v_n=\frac{u_n}{\parallel u_n\parallel_{L^q(\Omega)}} \ \ \ \forall~n\in \mathbb{N}\, .
$$
Clearly sequence $(v_n)$ is bounded in $ W^{1,q}(\Omega)$ so there exist a $v\in W^{1,q}(\Omega)$ and a subsequence of $(v_n)$, again denoted
$(v_n)$, such that
\[
v_n\rightharpoonup v ~\mbox{in} ~W^{1,q}(\Omega),
\]
\[
v_n\rightarrow v ~\mbox{in} ~L^{q}(\Omega),~v_n\rightarrow v ~\mbox{in} ~L^{q}(\partial\Omega).
\]
As $\parallel v_n\parallel_{L^q(\Omega)}=1~\forall ~n\in \mathbb{N}$ we have $\parallel v\parallel_{L^q(\Omega)}=1$, and
\[
\int_\Omega \mid \nabla v\mid ^{q}~dx\leq \underset{n\rightarrow \infty}{\liminf}~\int_\Omega \mid \nabla v_n\mid ^{q}~dx=\underset{n\rightarrow \infty}{\liminf}\frac{1}{\parallel u_n\parallel^q_{L^q(\Omega)}}J(u_n)=0,
\]
which shows that $v$ is a constant function. On the other hand, since $(v_n)\subset \mathcal{C}_{q}$ and $\mathcal{C}_{q}$ is weakly closed in
$W^{1,q}(\Omega)$, we infer that $v\in \mathcal{C}_{q}$, hence $v\equiv 0$. But this contradicts the fact that $\parallel v\parallel_{L^q(\Omega)}=1$.
Therefore, $(u_n)$ is indeed bounded in $W^{1,q}(\Omega)$, hence there exist $u^*\in W^{1,q}(\Omega)$ and a subsequence of $(u_n)$, which is also
denoted $(u_n)$, such that
\[
u_n\rightharpoonup u^* ~\mbox{in} ~W^{1,q}(\Omega),
\]
\[
u_n\rightarrow u^* ~\mbox{in} ~L^{q}(\Omega),~u_n\rightarrow u^* ~\mbox{in} ~L^{q}(\partial\Omega).
\]
By Lebesgue's Dominated Convergence Theorem we obtain $u^*\in \mathcal{C}_{1q}$, so the weak lower semicontinuity of $J$ leads to $\sigma=J(u^*).$
In addition $J(u^*)>0$. Indeed, assuming by contradiction that $J(u^*)=0$ would imply that $u^* \equiv Const.$, which is impossible because $u^*\in
\mathcal{C}_{1q}$.
\end{proof}

\begin{rem}\label{remarca}
For $p, \, q, \, \Omega$ satisfying our assumptions define
\begin{equation}\label{eq:2.3}
\lambda_1:=\underset{w\in\mathcal{C}\setminus\{0\}}{\inf }~\frac{\int_\Omega\mid\nabla w\mid^q~dx}{\int_{\Omega}a\mid w\mid^q~dx+
\int_{\partial\Omega}b\mid  w\mid^q~d\sigma},
\tag{2.3}
\end{equation}
and
\begin{equation}\label{eq:2.4}
\widetilde{\lambda}_1:=\underset{w\in\mathcal{C}\setminus\{0\}}{\inf }~\frac{\frac{1}{q}\int_\Omega\mid\nabla w\mid^q~dx+\frac{1}{p}\int_\Omega\mid\nabla w\mid^p~dx}{\frac{1}{q}\big(\int_{\Omega}a\mid w\mid^q~dx+\int_{\partial\Omega}b\mid w\mid^q~d\sigma\big)}.
\tag{2.4}
\end{equation}
Note that the denominators of the above fractions may equal zero for some $w$'s in $\mathcal{C}\setminus \{ 0\}$ and in such cases the corresponding
numerators are obviously $\neq 0$ thus the values of those fractions are considered $\infty$ so they do not contribute to $\lambda_1$
or $\widetilde{\lambda}_1$.

In fact $\lambda_1=\widetilde{\lambda}_1.$ Indeed, it is obvious that $\lambda_1\leq\widetilde{\lambda}_1$ and for the converse inequality we
note that $\forall v\in
\mathcal{C}\setminus\{0\}$, $t>0,$ we have $tv\in \mathcal{C}\setminus\{0\}$ and
\begin{equation*}
\begin{split}
\widetilde{\lambda}_1=&\underset{w\in\mathcal{C}\setminus\{0\}}{\inf }~\frac{\frac{1}{q}\int_\Omega\mid\nabla w\mid^q~dx+
\frac{1}{p}\int_\Omega\mid\nabla w\mid^p~dx}{\frac{1}{q}\big(\int_{\Omega}a\mid w\mid^q~dx+\int_{\partial\Omega}b\mid w\mid^q~d\sigma\big)}\leq\\
&\frac{\int_\Omega\mid\nabla v\mid^q~dx}{\int_{\Omega}a\mid v\mid^q~dx+\int_{\partial\Omega}b\mid v\mid^q~d\sigma}+
t^{p-q}\frac{q\int_\Omega\mid\nabla v\mid^p~dx}{p\big(\int_{\Omega}a\mid w\mid^q~dx+\int_{\partial\Omega}b\mid v\mid^q~d\sigma\big)}.
\end{split}
\end{equation*}
Now letting $t\rightarrow\infty$ if $q>p$, and $t\rightarrow 0$ if $q<p$, then passing to infimum for $v\in \mathcal{C}\setminus\{0\}$
we get the desired inequality. Therefore $\lambda_1$ can be expressed in two different ways (see \eqref{eq:2.3} and \eqref{eq:2.4}).
\end{rem}

\begin{rem}\label{remark2}
As a consequence of Lemma~\ref{lema1} we have $\lambda_1>0$. Indeed,
\[
\lambda_1:=\underset{w\in\mathcal{\widetilde{C}}}{\inf }~\int_{\Omega}\mid\nabla w\mid^q~dx \, ,
\]
where $\mathcal{\widetilde{C}}=\{v\in \mathcal{C}; \int_{\Omega}a\mid\nabla v\mid^q~dx+\int_{\partial\Omega}b\mid\nabla v\mid^q~d\sigma=1\}$. So $\lambda_1=J(u^*)$ for $p\leq q$
  and $\lambda_1\geq J(u^*)$ if $p > q.$ Thus in both cases $\lambda_1>0.$
\end{rem}
Now we recall a result which is known as the Lagrange multiplier rule (see, e.g., \cite[Thm. 2.2.18, p. 78]{papa}).
\begin{lem}\label{lema2}
Let $X, Y$ be real Banach spaces and let $f:D\rightarrow \mathbb{R}$ be Fr\'{e}chet differentiable , $g\in C^1(D,Y)$, where $D\subseteq X$ is
a nonempty open set. If $v_0$ is a local minimizer of the constraint problem
\[
{\min }~f(v), \ \ \ g(v)=0,
\] and $\mathcal{R}(g'(v_0))$ (the range of $g'(v_0)$) is closed, then there exist $\lambda^*\in \mathbb{R}$ and $y^{*}\in Y^{*}$ not both equal
to zero such that
\begin{equation}\label{eq:2.5}
\lambda^*f'(v_0)+y^{*}\circ g'(v_0)=0,
\tag{2.5}
\end{equation}
where $Y^{*}$ stands for the dual of $Y.$
\end{lem}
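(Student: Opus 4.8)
The statement is the classical Lagrange multiplier rule in Banach spaces, so the plan is to reproduce its standard proof, splitting according to whether the bounded linear operator $g'(v_0):X\to Y$ is onto; since $\mathcal{R}(g'(v_0))$ is assumed closed, exactly one of two cases occurs. \emph{Case 1: $\mathcal{R}(g'(v_0))\neq Y$.} Then $\mathcal{R}(g'(v_0))$ is a proper closed subspace of $Y$, so by the Hahn--Banach theorem there is $y^{*}\in Y^{*}\setminus\{0\}$ vanishing identically on $\mathcal{R}(g'(v_0))$, i.e.\ $y^{*}\circ g'(v_0)=0$; choosing $\lambda^{*}=0$ then gives \eqref{eq:2.5} with $(\lambda^{*},y^{*})\neq(0,0)$.

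\emph{Case 2: $g'(v_0)$ is surjective.} Here the essential tool is the Lyusternik--Graves surjection theorem: since $g$ is $C^1$ near $v_0$ and $g'(v_0)$ is onto, for every $h\in\ker g'(v_0)$ there is a differentiable curve $t\mapsto v(t)\in D$, defined for $|t|$ small, with $v(0)=v_0$, $v'(0)=h$ and $g(v(t))\equiv 0$. This is proved by a Newton/contraction argument resting on the open mapping theorem: when $\ker g'(v_0)$ is complemented one may fix a bounded linear right inverse $R$ of $g'(v_0)$, write $v(t)=v_0+th+w$, solve $g(v_0+th+w)=0$ as a fixed point of $w\mapsto-R[g(v_0+th+w)-g'(v_0)w]$ --- a contraction on a small ball for $t$ and $\|w\|$ small by continuity of $g'$ --- and obtain $\|w(t)\|=o(t)$ from $g(v_0)=0$ and $g'(v_0)h=0$; in general one invokes Graves' theorem. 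Since $v_0$ is a local minimizer of $f$ on $\{g=0\}$, the real function $t\mapsto f(v(t))$ has a local minimum at $t=0$, so $f'(v_0)h=0$; as $h\in\ker g'(v_0)$ was arbitrary, $\ker g'(v_0)\subseteq\ker f'(v_0)$.

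It remains, still in Case 2, to factor $f'(v_0)$ through $g'(v_0)$. Define $y^{*}:Y\to\mathbb{R}$ by $y^{*}(y):=f'(v_0)x$, where $x\in X$ is any element with $g'(v_0)x=y$; this is well defined and linear precisely because $\ker g'(v_0)\subseteq\ker f'(v_0)$. By the open mapping theorem there is $c>0$ such that every $y\in Y$ has such a preimage with $\|x\|\le c\|y\|$, whence $|y^{*}(y)|\le c\,\|f'(v_0)\|\,\|y\|$ and $y^{*}\in Y^{*}$. Then $f'(v_0)=y^{*}\circ g'(v_0)$, i.e.\ \eqref{eq:2.5} holds with $\lambda^{*}=-1$ and this $y^{*}$; again $(\lambda^{*},y^{*})\neq(0,0)$, so the proof is complete.

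The step I expect to be the main obstacle is the surjection argument in Case 2: the remaining ingredients are soft functional analysis (the Hahn--Banach and open mapping theorems), whereas producing the constraint-respecting curve with prescribed initial velocity needs the quantitative contraction/implicit-function estimate and a careful use of the $C^1$ regularity of $g$. Being a classical result, it may of course alternatively just be cited, as the authors do via \cite{papa}.
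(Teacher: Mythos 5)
Your proof is correct, but there is nothing in the paper to compare it against: the authors do not prove Lemma~\ref{lema2} at all, they simply recall it as a known result and cite \cite[Thm.~2.2.18, p.~78]{papa} (as you yourself anticipate in your closing sentence). What you supply is the standard two-case argument, and it is sound. The degenerate case, where $\mathcal{R}(g'(v_0))$ is a proper closed subspace of $Y$, is correctly dispatched by Hahn--Banach with $\lambda^*=0$; this is exactly where the closed-range hypothesis is used. In the surjective case you correctly identify the only genuinely hard ingredient, namely the Lyusternik--Graves tangent-space theorem producing, for each $h\in\ker g'(v_0)$, a curve $v(t)=v_0+th+o(t)$ lying in $\{g=0\}$ (for this one only needs $v(t)-v_0-th=o(t)$, not an honest derivative of the curve, and your remark that the naive contraction argument requires a bounded linear right inverse while the general case needs Graves' iteration is accurate). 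The remaining steps --- deducing $\ker g'(v_0)\subseteq\ker f'(v_0)$ from local minimality, and factoring $f'(v_0)=y^*\circ g'(v_0)$ with $y^*$ bounded via the open mapping theorem, yielding \eqref{eq:2.5} with $\lambda^*=-1$ --- are all correct; note that even if $f'(v_0)=0$ (so $y^*=0$) the conclusion still holds because $\lambda^*\neq 0$. In short: a complete and correct proof of a result the paper only cites.
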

\begin{rem}\label{remark13}
Define
\begin{equation}\label{eq:2.6}
\parallel u\parallel_{ab}:=\parallel \nabla u\parallel_{L^p(\Omega)}+\Big(\parallel a^{1/q} u\parallel^q_{L^q(\Omega)}+
\parallel b^{1/q} u\parallel^q_{L^q(\partial\Omega)}\Big)^{1/q} \ \ \forall u\in W^{1,p}(\Omega)
 \tag{2.6}
\end{equation}
If $p>q$ and $a, \, b$ satisfy $(H_{ab})$ then \eqref{eq:2.6} is a norm in $W^{1,q}(\Omega)$ equivalent with the usual norm of this space. 
This fact follows from \cite[Proposition 3.9.55]{DMP}. Indeed, the seminorm
\[
w(u):= \Big(\parallel a^{1/q} u\parallel^q_{L^q(\Omega)}+
\parallel b^{1/q} u\parallel^q_{L^q(\partial\Omega)}\Big)^{1/q} \ \ \forall u\in W^{1,p}(\Omega),
\]
satisfies the two requirements of that proposition \\
(i)~~$\exists d>0$ such that $w(u)\leq d \parallel u\parallel_{W^{1,p}(\Omega)} \ \ \forall u\in W^{1,p}(\Omega),$ and\\
(ii)~if $u=\mbox{constant}$, then $w(u)=0$ implies $u\equiv 0.$
\end{rem}

\section{ The main result}

Let us state the main result of this paper:

\begin{thm}\label{teorema1}
Assume that $(H_{pq})$, $(H_{\Omega})$ and $(H_{ab})$ above are fulfilled. Then the set of eigenvalues of problem \eqref{eq:1.1}
is $\{0\} \cup (\lambda_1, \infty)$, where $\lambda_1$ is the positive constant defined by \eqref{eq:2.3}.
\end{thm}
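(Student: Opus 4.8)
\section*{Proof proposal}

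The plan is to establish the two inclusions separately: first that nothing in $[0,\lambda_1]\setminus\{0\}$ is an eigenvalue, and then that every $\lambda>\lambda_1$ is an eigenvalue. For the first direction, suppose $\lambda>0$ is an eigenvalue with eigenfunction $u_\lambda\in W\setminus\{0\}$. Taking $v=u_\lambda$ in \eqref{eq:1.3} shows
\[
\int_\Omega\big(|\nabla u_\lambda|^p+|\nabla u_\lambda|^q\big)\,dx=\lambda\Big(\int_\Omega a|u_\lambda|^q\,dx+\int_{\partial\Omega}b|u_\lambda|^q\,d\sigma\Big),
\]
and by Remark~\ref{remark11} the right-hand side is strictly positive (the eigenfunction is non-constant). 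Also, from \eqref{eq:2.1} we know $u_\lambda\in\mathcal{C}\setminus\{0\}$. Dividing and using the definition \eqref{eq:2.4} of $\widetilde\lambda_1=\lambda_1$ (the formulation via the $\tfrac1p,\tfrac1q$-weighted Rayleigh quotient) gives $\lambda\ge\lambda_1$. To rule out $\lambda=\lambda_1$ itself, I would argue that if $\lambda_1$ were an eigenvalue then the corresponding eigenfunction $u_{\lambda_1}$ would realize the infimum in \eqref{eq:2.3}; but then, using $p\ne q$, a scaling argument (replacing $u_{\lambda_1}$ by $tu_{\lambda_1}$ and differentiating the quotient in \eqref{eq:2.4} at the minimizer) forces either $\int_\Omega|\nabla u_{\lambda_1}|^p\,dx=0$ or $\int_\Omega|\nabla u_{\lambda_1}|^q\,dx=0$, hence $u_{\lambda_1}$ constant, contradicting Remark~\ref{remark11}. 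This nonhomogeneity obstruction is exactly why the endpoint is excluded, and is the conceptual heart of the ``$\le$'' half.

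For the second direction — the substantive part — fix $\lambda>\lambda_1$; the goal is to produce $u_\lambda\in W\setminus\{0\}$ satisfying \eqref{eq:1.3}. I would set up the energy functional
\[
\mathcal{E}_\lambda(u):=\frac1p\int_\Omega|\nabla u|^p\,dx+\frac1q\int_\Omega|\nabla u|^q\,dx-\frac{\lambda}{q}\Big(\int_\Omega a|u|^q\,dx+\int_{\partial\Omega}b|u|^q\,d\sigma\Big)
\]
on $W$, and look for a nontrivial critical point. Because constants are always critical and the functional is not coercive on all of $W$ (for $\lambda>\lambda_1$ it is negative somewhere on $\mathcal{C}$ by the definition of $\lambda_1$), the natural device is to restrict to the set $\mathcal{C}$ (equivalently $\mathcal{C}_q$ when $q>p$) — this is a manifold-type constraint, a Nehari-type restriction, which both removes the constant functions and makes the problem tractable. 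On $\mathcal{C}$, after checking (via the embedding $W\hookrightarrow L^q$, the trace embedding $W\hookrightarrow L^q(\partial\Omega)$, and the norm equivalence of Remark~\ref{remark13} when $p>q$, or the standard Poincaré–Wirtinger-type inequality controlling the $L^q$-norm on $\mathcal{C}$ when $q>p$) that $\mathcal{E}_\lambda$ is coercive and weakly lower semicontinuous, I would obtain a global minimizer $u_\lambda$ on $\mathcal{C}$ with $\mathcal{E}_\lambda(u_\lambda)<0$; in particular $u_\lambda\ne 0$ and $u_\lambda$ is non-constant. That the minimum is negative uses $\lambda>\lambda_1$ directly: pick $w\in\mathcal{C}$ with $\int_\Omega|\nabla w|^q/(\int_\Omega a|w|^q+\int_{\partial\Omega}b|w|^q)<\lambda$ and rescale $tw$, noting that as $t\to\infty$ (if $q>p$) or $t\to 0$ (if $q<p$) the $p$-term becomes negligible relative to the $q$-terms, so $\mathcal{E}_\lambda(tw)<0$ for a suitable $t$.

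The final step is to upgrade the constrained minimizer to a genuine weak solution, i.e.\ to remove the constraint. Here I would apply the Lagrange multiplier rule, Lemma~\ref{lema2}, with $f=\mathcal{E}_\lambda$, $Y=\mathbb{R}$, and $g(u)=\int_\Omega a|u|^{q-2}u\,dx+\int_{\partial\Omega}b|u|^{q-2}u\,d\sigma$ defining $\mathcal{C}=g^{-1}(0)$; one checks $g\in C^1$ and that $g'(u_\lambda)\ne 0$ (so its range is all of $\mathbb{R}$, hence closed, and the multiplier $\lambda^*$ in \eqref{eq:2.5} cannot vanish), giving $\mathcal{E}_\lambda'(u_\lambda)=\mu\,g'(u_\lambda)$ for some $\mu\in\mathbb{R}$. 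Testing this identity against $v=1$ kills the left side (since $\mathcal{E}_\lambda'(u_\lambda)[1]=-\lambda(\int a|u_\lambda|^{q-2}u_\lambda+\int_{\partial\Omega}b|u_\lambda|^{q-2}u_\lambda)=0$ as $u_\lambda\in\mathcal{C}$) while $g'(u_\lambda)[1]=(q-1)(\int a|u_\lambda|^{q-2}+\int_{\partial\Omega}b|u_\lambda|^{q-2})$, which is strictly positive by Remark~\ref{remark11}; hence $\mu=0$, so $\mathcal{E}_\lambda'(u_\lambda)=0$ in $W^*$, which is exactly \eqref{eq:1.3}. The main obstacle I anticipate is the coercivity/PS-type compactness bookkeeping on $\mathcal{C}$ in the two regimes $p<q$ and $p>q$ separately — in the $p>q$ case one leans on Remark~\ref{remark13}'s equivalent norm, while in the $p<q$ case one must verify a Poincaré-type inequality valid on $\mathcal{C}$ (using that non-constant functions have positive weighted $L^q$-mass there); handling the non-reflexive/non-uniformly-convex subtleties of $W^{1,p}$ for $p$ near $1$ and ensuring the weak limit stays in the weakly closed set $\mathcal{C}$ is where the care is needed, though the weak closedness was already recorded in Section~2.
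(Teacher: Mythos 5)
Your overall architecture (Rayleigh-quotient comparison for the lower bound, constrained minimization plus the Lagrange multiplier rule for $\lambda>\lambda_1$) is the right one, and in the regime $2<q<p$ your plan coincides with the paper's Case 1 and works. The fatal problem is the regime $1<p<q$. There you propose to minimize $\mathcal{E}_\lambda=\mathcal{J}_\lambda$ globally over $\mathcal{C}$, asserting coercivity via a Poincar\'e-type inequality on $\mathcal{C}$. But for $\lambda>\lambda_1$ the functional is \emph{unbounded below} on $\mathcal{C}$: choosing $w\in\mathcal{C}\setminus\{0\}$ with $\int_\Omega|\nabla w|^q\,dx<\lambda\big(\int_\Omega a|w|^q\,dx+\int_{\partial\Omega}b|w|^q\,d\sigma\big)$ and using that $\mathcal{C}$ is a cone, one gets $\mathcal{J}_\lambda(tw)=\frac{t^p}{p}\int_\Omega|\nabla w|^p\,dx+\frac{t^q}{q}\big(\int_\Omega|\nabla w|^q\,dx-\lambda\!\int_\Omega a|w|^q\,dx-\lambda\!\int_{\partial\Omega}b|w|^q\,d\sigma\big)\to-\infty$ as $t\to\infty$, since the $t^q$-coefficient is negative and $q>p$. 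Your own scaling remark (``the $p$-term becomes negligible'') is exactly this computation, so no inequality on $\mathcal{C}$ can restore coercivity, and there is no global minimizer to feed into the multiplier rule. You also conflate $\mathcal{C}$ with a Nehari manifold: $\mathcal{C}$ is only the orthogonality constraint. The paper's Case 2 instead minimizes $\mathcal{J}_\lambda$ over the genuine Nehari set $\mathcal{N}_\lambda=\{v\in\mathcal{C}\setminus\{0\}:\langle\mathcal{J}_\lambda'(v),v\rangle=0\}$, on which $\mathcal{J}_\lambda(u)=\frac{q-p}{qp}\int_\Omega|\nabla u|^p\,dx>0$, proves the infimum is positive and attained, and then applies the Lagrange rule with \emph{two} constraints ($g_1$ the Nehari identity, $g_2$ the $\mathcal{C}$-constraint), killing both multipliers by testing with $v\equiv1$ and $v=u_*$. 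Some version of this (or a fibering/mountain-pass substitute) is indispensable; your proposal is missing it.

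There is also a smaller defect in the first half. From $\int_\Omega(|\nabla u_\lambda|^p+|\nabla u_\lambda|^q)\,dx=\lambda D$ (with $D$ the weighted $q$-mass), dividing by $D$ and comparing with the weighted quotient \eqref{eq:2.4} gives $\widetilde\lambda_1\le\lambda$ only when $q<p$; when $q>p$ the $\tfrac1p,\tfrac1q$-weighted quotient of $u_\lambda$ exceeds $\lambda$, so your inequality does not follow as stated. Moreover your endpoint exclusion rests on the unjustified premise that an eigenfunction for $\lambda_1$ realizes the infimum in \eqref{eq:2.3}. Both issues disappear if you argue as the paper does, directly with \eqref{eq:2.3}: the identity above yields $\lambda_1\le\int_\Omega|\nabla u_\lambda|^q\,dx/D=\lambda-\int_\Omega|\nabla u_\lambda|^p\,dx/D<\lambda$, the strict inequality coming from the non-constancy of $u_\lambda$ (Remark~\ref{remark11}); this excludes all of $(0,\lambda_1]$ in one stroke with no separate endpoint case.
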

\begin{proof}
We have alredy said that $\lambda_0=0$ is an eigenvalue of problem \eqref{eq:1.1} and any other eigenvalue of this problem belongs to $(0,\infty)$.
Let us first prove that there is no eigenvalue of problem \eqref{eq:1.1} in $(0, \lambda_1]$. Assume by contradiction that there exists an
eigenvalue $\lambda\in (0, \lambda_1]$ and let $u_\lambda\in \mathcal{C}\setminus\{0\}$ be a corresponding eigenfunction. Choosing $v=u_\lambda$
in \eqref{eq:1.3} yields
\begin{equation}\label{eq:3.1}
\int_\Omega \big(\mid \nabla u_\lambda\mid ^{p}+\mid \nabla u_\lambda\mid ^{q}\big)~dx=\lambda\Big(
\int_\Omega a\mid  u_\lambda\mid ^{q}~dx+\int_{\partial\Omega} b \mid  u_\lambda\mid ^{q} ~d\sigma
\Big).
\tag{3.1}
\end{equation}
Note that $\int_\Omega a\mid  u_\lambda\mid ^{q}~dx+\int_{\partial\Omega} b \mid  u_\lambda\mid ^{q} ~d\sigma \neq 0$, otherwise $u_{\lambda}\equiv Const.$
(cf. \eqref{eq:3.1}) which is impossible (see Remark \ref{remark11}).
On the other hand, as $u_\lambda\in \mathcal{C}\setminus\{0\}$, we derive from \eqref{eq:2.3} and \eqref{eq:3.1}
\begin{equation*}
\begin{split}
\lambda\leq \lambda_1\leq &\frac{\int_\Omega\mid\nabla u_\lambda\mid^q~dx}{\int_{\Omega}a\mid
 u_\lambda \mid^q~dx+\int_{\partial\Omega}b\mid u_\lambda\mid^q~d\sigma}\\
&=\frac{\lambda\Big(\int_{\Omega}a\mid u_\lambda \mid^q~dx+\int_{\partial\Omega}b\mid u_\lambda\mid^q~d\sigma\Big)-
\int_\Omega\mid \nabla u_\lambda\mid^p~dx}{\int_{\Omega}a\mid u_\lambda \mid^q~dx+\int_{\partial\Omega}b\mid u_\lambda\mid^q~d\sigma}\\
&= \lambda-\frac{\int_\Omega\mid\nabla u_\lambda\mid^p~dx}{\int_{\Omega}a\mid u_\lambda \mid^q~dx+
\int_{\partial\Omega}b\mid u_\lambda\mid^q~d\sigma}< \lambda,
\end{split}
\end{equation*}
which is clearly impossible.

In what follows we shall prove that every $\lambda>\lambda_1$ is an eigenvalue of problem \eqref{eq:1.1}. To this purpose we fix such a $\lambda$ and
define the functional $\mathcal{J}_\lambda: W\rightarrow \mathbb{R},$
\[
\mathcal{J}_\lambda(u)=\frac{1}{p}\int_\Omega \mid \nabla u\mid ^{p}~dx+\frac{1}{q}\int_\Omega\mid \nabla u\mid ^{q}~dx-
\frac{\lambda}{q}\Big(\int_\Omega a\mid  u\mid ^{q}~dx+\int_{\partial\Omega} b \mid  u_\lambda\mid ^{q} ~d\sigma\Big).
\]
It is easily seen that functional $\mathcal{J}_\lambda\in C^1(W\setminus\{0\};\mathbb{R})$ (even more, $\mathcal{J}_\lambda\in C^1(W;\mathbb{R})$
if $2<q<p$)
and
\begin{equation*}
\begin{split}
\langle\mathcal{J}'_\lambda(u),v\rangle&=\int_\Omega \mid \nabla u\mid ^{p-2}\nabla u\cdot\nabla v~dx+
\int_\Omega\mid \nabla u\mid ^{q-2}\nabla u\cdot\nabla v~dx\\
&-\lambda\Big(\int_\Omega a\mid  u\mid ^{q-2}uv~dx+\int_{\partial\Omega} b \mid  u_\lambda\mid ^{q-2}uv ~d\sigma\Big)\ \
\forall v\in W,~ u\in W\setminus\{0\}.
\end{split}
\end{equation*}
So, according to Definition~\ref{def1}, $\lambda>\lambda_1$ is an eigenvalue of problem \eqref{eq:1.1} if and only if there exists
a critical point $u_\lambda\in W\setminus\{0\}$ of $\mathcal{J}_\lambda$, i. e.  $\mathcal{J}'_\lambda(u_\lambda)=0$.

We shall discuss two cases which are complementary to each other.

\bigskip

\textbf{Case 1: $2<q<p$}. We shall prove that in this case functional $\mathcal{J}_\lambda$ is coercive on
$\mathcal{C}\subset W=W^{1,p}(\Omega)$, i. e.,
\[
\underset{\parallel u\parallel_{W^{1,p}(\Omega)}\rightarrow\infty, u\in\mathcal{C}}{\lim}\mathcal{J}_\lambda(u)=\infty.
\]
To this purpose we define $T_1, T_2, T_3: \mathcal{C}\rightarrow\mathbb{R}$ as follows
$$
T_1(u)=\int_\Omega \mid \nabla u\mid ^{p}~dx,~~ T_2(u)=\int_\Omega\mid \nabla u\mid ^{q}~dx, ~~T_3(u)=
\int_\Omega a\mid  u\mid ^{q}~dx+\int_{\partial\Omega} b \mid  u_\lambda\mid ^{q} ~d\sigma,
$$
so $\mathcal{J}_\lambda(u)=\frac{1}{p}T_1(u)+\frac{1}{q}T_2(u)-\frac{\lambda}{q}T_3(u).$

We know from Remark~\ref{remark13} that the usual norm of $W^{1,p}(\Omega),$ denoted $\parallel \cdot \parallel_{W^{1,p}(\Omega)}$,
is equivalent with the norm $\parallel \cdot \parallel_{ab}$ defined in \eqref{eq:2.6}. Thus $\parallel u\parallel_{W^{1,p}(\Omega)}\rightarrow\infty$
if and only if $\parallel u\parallel_{ab}=T_1(u)^{1/p}+T_3(u)^{1/q}\rightarrow\infty$. From \eqref{eq:2.3} we then have
\begin{equation}\label{eq:3.2}
 \lambda_1 T_3(u)\leq T_2(u)~~\forall~ u\in \mathcal{C},
\tag{3.2}
\end{equation}
 hence
 \[
 \frac{1}{p}T_1(u)+\frac{1}{q}T_2(u)\geq \frac{1}{p}\big(T_1(u)+T_2(u)\big)\geq  \frac{1}{p}(1+\lambda_1)\big(T_1(u)+T_3(u)\big),
 \]
which implies
\begin{equation}\label{eq:3.3}
\underset{\parallel u\parallel_{W^{1,p}(\Omega)}\rightarrow\infty, u\in\mathcal{C}}{\lim}\biggl(\frac{1}{p}T_1(u)+\frac{1}{q}T_2(u)\biggr)=\infty.
\tag{3.3}
\end{equation}
By H\"{o}lder's inequality we have,
\[
T_2(u)\leq\mid\Omega\mid_N^{(p-q)/p} T_1(u)^{q/p}~~\forall~u\in W^{1,p}(\Omega),
\]
so it follows from \eqref{eq:3.3}
\begin{equation}\label{eq:3.4}
\underset{\parallel u\parallel_{W^{1,p}(\Omega)}\rightarrow\infty, u\in\mathcal{C}}{\lim}T_1(u)=\infty.
\tag{3.4}
\end{equation}
So, we obtin from \eqref{eq:3.2} and H\"{o}lder's inequality
\[
\mathcal{J}_\lambda(u)\geq \frac{1}{p}T_1(u)+\frac{1}{q}T_2(u)-\frac{\lambda}{\lambda_1 q}T_2(u)\geq \frac{1}{p}T_1(u)-
\frac{\lambda}{\lambda_1 q}\mid\Omega\mid^{(p-q)/p} T_1(u)^{q/p}.
\]
Since $q<p$ the right-hand side of the above inequality tends to $\infty$ as $\parallel u\parallel_{W^{1,p}(\Omega)}\rightarrow\infty$ (cf. \eqref{eq:3.4})
so $\mathcal{J}_\lambda$ is indeed coercive on $\mathcal{C}$.

We note that $\mathcal{C}$ is a weakly closed subset of the reflexive Banach space $W=W^{1,p}(\Omega),$ and functional $\mathcal{J}_\lambda$
is weakly lower semicontinuous on $\mathcal{C}$ with respect to the norm of $W^{1,p}(\Omega).$
So $\mathcal{J}_\lambda$ has a global minimizer $u_*\in \mathcal{C}$, i.e., $\mathcal{J}_\lambda(u_*)=\min_{\mathcal{C}}\mathcal{J}_\lambda$
(see, e.g., \cite [Theorem 1.2] {St}). From Remark \ref{remarca} we know that $\lambda_1=\widetilde{\lambda}_1$, hence \eqref{eq:2.4})
as $\lambda>\lambda_1=\widetilde{\lambda}_1$. Then (by \eqref{eq:2.4}) there exists $u_{0\lambda}\in \mathcal{C}$ such that
$\mathcal{J}_\lambda(u_{0\lambda})<0.$ It follows that
\[
\mathcal{J}_\lambda(u_*)\leq \mathcal{J}_\lambda(u_{0\lambda})<0,
\]
which shows that $u_*\neq 0.$ In fact $u_*$ is a solution of the minimization problem
\[
\min_{v\in W} \mathcal{J}_\lambda(v),
\]
under the restriction
\[
g(v):=\int_\Omega a\mid  v\mid ^{q-2} v  ~dx+\int_{\partial\Omega} b \mid  v\mid ^{q-2} v  ~d\sigma=0.
\]
We can apply Lemma~\ref{lema2} with $X=W,~ D=W,~Y=\mathbb{R}, f=\mathcal{J}_\lambda,$  $g:W\rightarrow\mathbb{R}$ being the function just defined above,
and $v_0=u_*,$ on the condition that $\mathcal{R}(g'(u_*))$ is a closed set. In fact we can show that $g'(u_*)$ is surjective, i.e.,
$\forall~~ c\in \mathbb{R}$ there exists a $w\in W$ such that
\[
\langle g'(u_*),w\rangle=c.
\]
We seek $w$ of the form $w= u_*+\beta,~\beta \in \mathbb{\mathbb{R}} $. Thus we obtain from the above equation (using $u_*\in \mathcal{C}$)
\[
\beta(q-1)\Big(\int_\Omega a\mid  u_*\mid ^{q-2} ~dx+\int_{\partial\Omega} b \mid  u_*\mid ^{q-2}   ~d\sigma\Big)=c,
\]
which has a unique solution $\beta$ since
\[
\int_{\Omega} a\mid u_*{\mid}^{q-2}~dx + \int_{\partial \Omega}b \mid u_*{\mid}^{q-2} ~ d\sigma \neq 0 \, ,
\]
otherwise $\mathcal{J}_{\lambda}(u_*)=p^{-1}\int_{\Omega}\mid \nabla u_*{\mid}^p +q^{-1}\int_{\Omega} \mid \nabla u_*{\mid}^q dx$ which
contradicts $\mathcal{J}_{\lambda}(u_*) <0$. Thus $g'(u_*)$ is surjective, as claimed. By Lemma~\ref{lema2} there exist $\lambda^*, \mu\in \mathbb{R}$,
not both equal to zero, such that
\[
\lambda^*\langle \mathcal{{J}}'_\lambda(u_*), v\rangle+\mu\langle g'(u_*), v\rangle=0,~~\forall~v\in W=W^{1,p}(\Omega),
\]
or, equivalently,
\begin{equation*}
\begin{split}
\lambda^*\biggl(\int_\Omega \Big(\mid \nabla u_*\mid ^{p-2}+&\mid \nabla u_*\mid ^{q-2}\Big)\nabla u_* \cdot \nabla v~dx, \\
&-\lambda\Big(\int_\Omega a\mid  u_*\mid ^{q-2} u_*  v~dx+\int_{\partial\Omega} b \mid  u_*\mid ^{q-2} u_*  v~d\sigma\Big)\biggr)\\
&+\mu (q-1)\Big(\int_\Omega a\mid  u_*\mid ^{q-2}  v~dx+\int_{\partial\Omega} b \mid  u^*\mid ^{q-2}   v~d\sigma\Big)=0 \ \ \forall v\in W.
\end{split}
\end{equation*}
Choosing $v\equiv 1$ in the above equality and taking into account the fact that $u_*\in \mathcal{C}$ we get
\[
\mu \Big(\int_\Omega a\mid  u_*\mid ^{q-2}  ~dx+\int_{\partial\Omega} b \mid  u_*\mid ^{q-2}   ~d\sigma\Big)=0,
\]
which implies $\mu=0$. Threfore $\lambda^*\neq 0$ and so
\begin{equation*}
\begin{split}
\int_\Omega \Big(\mid \nabla u_*\mid ^{p-2}+&\mid \nabla u_*\mid ^{q-2}\Big)\nabla u_* \cdot \nabla v~dx, \\
&-\lambda\Big(\int_\Omega a\mid  u_*\mid ^{q-2} u_*  v~dx+\int_{\partial\Omega} b \mid  u_*\mid ^{q-2} u_*  v~d\sigma\Big)=0~\forall~v\in W,
\end{split}
\end{equation*}
i. e., $\lambda$ is an eigenvalue of problem \eqref{eq:1.1}

\bigskip

\textbf{Case 2: $q> 2$, $1<p<q$}. In this case $W=W^{1,q}(\Omega).$ Let $\lambda>\lambda_1$ be a fixed number. In this case we cannot expect coercivity on $W$ for functional $\mathcal{J}_\lambda$  which obviously belongs to $C^1(W\setminus\{0\};\mathbb{R}).$ We shall prove that $\mathcal{J}_\lambda$ has a critical point in $\mathcal{C}\setminus\{0\}$. To this purpose we consider 
a Nehari type manifold (see \cite{SW}):
\begin{equation*}
\begin{split}
\mathcal{N}_\lambda&=\{v\in \mathcal{C}\setminus\{0\}; \langle \mathcal{J}'_\lambda(v),v\rangle=0\}\\
&=\Big\{v\in \mathcal{C}\setminus\{0\}; \int_\Omega \big(\mid \nabla v\mid ^{p}+\mid \nabla v\mid ^{q}\big)~dx
=\lambda\Big(\int_\Omega a\mid  v\mid ^{q} ~dx+\int_{\partial\Omega} b \mid  v\mid ^{q} ~d\sigma\Big)\Big\}.
\end{split}
\end{equation*}
It is natural to consider the restriction of $\mathcal{J}_\lambda$ to $\mathcal{N}_\lambda$ as any possible eigenfunction corresponding to $\lambda$ belongs to $\mathcal{N}_\lambda$. Note that 
on $\mathcal{N}_\lambda$ functional $\mathcal{J}_\lambda$ has the form
\begin{equation*}
\begin{split}
\mathcal{J}_\lambda(u)&=\frac{1}{p}\int_\Omega \mid \nabla u\mid ^{p}~dx+\frac{1}{q}\int_\Omega\mid \nabla u\mid ^{q}~dx-\frac{\lambda}{q}\Big(\int_\Omega a\mid  u\mid ^{q}~dx+\int_{\partial\Omega} b \mid  u\mid ^{q} ~d\sigma\Big)\\
&=\frac{1}{p}\int_\Omega \mid \nabla u\mid ^{p}~dx-\frac{1}{q}\int_\Omega\mid \nabla u\mid ^{q}~dx=\frac{q-p}{qp}\int_\Omega \mid \nabla u\mid ^{p}~dx>0.
\end{split}
\end{equation*}
We shall prove that there exists a point $u_*\in \mathcal{N}_\lambda$ where $\mathcal{J}_\lambda$ attains its minimal value, $m_\lambda:= \underset{w\in\mathcal{{N}}_\lambda}{\inf }{\mathcal{{J}}_\lambda (w) }$ and $\mathcal{J}'_\lambda(u^*)=0.$ The proof relies on essentially known and new arguments, and is divided into several steps as follows. 

\bigskip

\textbf{Step 1.} $\mathcal{{N}}_\lambda \neq \emptyset.$

Indeed, since $\lambda>\lambda_1,$ we deduce from \eqref{eq:2.3} that there exists a $v_0\in \mathcal{C}\setminus\{0\} $ such that
\[
\int_\Omega\mid\nabla v_0\mid^q~dx < \lambda \Big(\int_{\Omega}a\mid v_0\mid^q~dx+\int_{\partial\Omega}b\mid v_0\mid^q~d\sigma\Big).
\]
We claim that for a convenient $t>0$, $tv_0\in \mathcal{{N}}_\lambda$. Since $\mathcal{{C}}$ is a cone, $tv_0\in \mathcal{{C}}$ for all $t\in \mathbb{R}$. So the condition 
$tv_0\in \mathcal{N}_\lambda$, $t>0$, reads
\[
t^p\int_\Omega \mid \nabla v_0\mid ^{p}~dx+t^q\int_\Omega\mid \nabla v_0\mid ^{q}~dx
=\lambda t^q\Big(\int_\Omega a\mid  v_0\mid ^{q} ~dx+\int_{\partial\Omega} b \mid  v_0\mid ^{q} ~d\sigma\Big). 
\]
This equation can be solved for $t$, 
\begin{equation}\label{eq:3.5}
t=\Biggl(\frac{\int_\Omega \mid \nabla v_0\mid ^{p}~dx }{\lambda \big(\int_{\Omega}a\mid v_0\mid^q~dx+\int_{\partial\Omega}b\mid v_0\mid^q~d\sigma\big)-\int_\Omega\mid\nabla v_0\mid^q~dx}\Biggr)^{1/(q-p)}, 
\tag{3.5}
\end{equation}
and hence for this $t$ we have $tv_0\in \mathcal{N}_\lambda$.

\bigskip
 
\textbf{Step 2.} Every minimizing sequence $(u_n)\subset\mathcal{{N}}_\lambda$ for $\mathcal{{J}}_\lambda$ is bounded in $W=W^{1,q}(\Omega). $

Let $(u_n)\subset\mathcal{{N}}_\lambda$ be such a minimizing sequence for $\mathcal{{J}}_\lambda$. Since $u_n\in \mathcal{{N}}_\lambda$ for all $n$, we have 
\begin{equation}\label{eq:3.6}
\mathcal{J}_\lambda(u_n)=\frac{q-p}{qp}\int_\Omega \mid \nabla u_n\mid ^{p}~dx\rightarrow  m_\lambda,~\mbox{as}~n\rightarrow\infty,
\tag{3.6}
\end{equation}
and 
\begin{equation}\label{eq:3.7}
\begin{split}
0&<\lambda\Big( \int_\Omega a\mid  u_n\mid ^{q} ~dx+\int_{\partial\Omega} b \mid  u_n\mid ^{q} ~d\sigma\Big)-\int_\Omega\mid \nabla u_n\mid ^{q}~dx\\
&=\int_\Omega \mid \nabla u_n\mid ^{p}~dx\rightarrow \frac{qp}{q-p} m_\lambda,~\mbox{as}~n\rightarrow\infty.
\end{split}
\tag{3.7}
\end{equation}
Assume by contradiction that  $(u_n)$ is unbounded in $W^{1,q}(\Omega).$ Then, on a subsequence, again denoted $(u_n)$, we have $\Vert u_n\Vert_{ab} \rightarrow \infty$ (for details on $\Vert \cdot \Vert_{ab}$ see Remark \ref{remark13}).  It follows from \eqref{eq:3.7} that (on a subsequence)
\[c_{n}:=\big(\parallel a^{1/q}u_n\parallel_{L^q(\Omega)}^q+\parallel b^{1/q} u_n\parallel_{L^q(\partial\Omega)}^q\big)^{1/q} \rightarrow \infty . 
\]
Denote $v_n=u_n/c_{n},~n\in\mathbb{N}.$  From \eqref{eq:3.7} we have $\int_\Omega\mid \nabla v_n\mid ^{q}~dx\leq \lambda $ for all $n$, so $(v_n)$ is bounded with respect to the norm  $\parallel\cdot\parallel_{ab},$ which is equivalent to the usual norm of $W^{1,q}(\Omega)$.  So there exists a $v_0\in W^{1,q}(\Omega)$ such that  $v_n\rightharpoonup v_0$ in $W^{1,q}(\Omega)$ (hence also in $W^{1,p}(\Omega)$). Obviously, $v_n\rightarrow v_0$ in $L^{q}(\Omega)$ and also in $L^q(\partial\Omega).$ As $\mathcal{C}$ is weakly closed in $W$ and $(v_n)\subset \mathcal{C}$ we also have $v_0\in \mathcal{C}.$ Now, from \eqref{eq:3.7} we deduce $\int_\Omega\mid \nabla v_n\mid ^{p}~dx\rightarrow 0,$ and so 
\[
\int_\Omega\mid \nabla v_0\mid ^{p}~dx\leq \underset{n\rightarrow\infty}{\liminf}\int_\Omega\mid \nabla v_n\mid ^{p}~dx=0.
\]
Therefore $v_0$ is a constant function. In fact $v_0\equiv 0$ since $v_0\in \mathcal{C}$. It follows that $v_n\rightarrow 0$ in $L^{q}(\Omega)$ and in $L^{q}(\partial\Omega),$ which contradicts the fact that 
$$
\parallel a^{1/q}v_n\parallel_{L^q(\Omega)}^q+\parallel b^{1/q}v_n\parallel_{L^q(\partial\Omega)}^q=1~\forall ~n\in \mathbb{N}.
$$

\bigskip

\textbf{Step 3.} $m_\lambda:= \underset{w\in\mathcal{{N}}_\lambda}{\inf }{\mathcal{{J}}_\lambda (w) }>0.$

Assume that, on the contrary, $m_\lambda=0.$ Let $(u_n)\subset\mathcal{{N}}_\lambda$ be a minimizing sequence for $\mathcal{{J}}_\lambda.$ We have (see \eqref{eq:3.7}) 
\begin{equation}\label{eq:3.8}
0<\lambda\Big( \int_\Omega a\mid  u_n\mid ^{q} ~dx+\int_{\partial\Omega} b \mid  u_n\mid ^{q} ~d\sigma\Big)-\int_\Omega\mid \nabla u_n\mid ^{q}~dx=\int_\Omega \mid \nabla u_n\mid ^{p}~dx\rightarrow 0~\mbox{as}~n\rightarrow\infty.
\tag{3.8}
\end{equation}
We know from Step 2 that $(u_n)$ is bounded in $W^{1,q}(\Omega),$ so there exists $u_0\in W^{1,q}(\Omega)$ such that, on a subsequence denoted again $(u_n)$, $u_n\rightharpoonup u_0$ in 
 $W^{1,q}(\Omega)$ (hence also in $W^{1,p}(\Omega)$), and $u_n\rightarrow u_0$ in $L^q(\Omega)$, $u_n\rightarrow u_0$ in $L^q(\partial\Omega).$ Clearly $u_0\in \mathcal{C}$ and from \eqref{eq:3.8} we deduce that $u_0$ is a constant function, so $u_0\equiv 0.$ Summarizing, we have proved that $u_n\rightharpoonup 0$ in $W^{1,q}(\Omega)$.  
 As in the previous step, we define $v_n=u_n/c_{n},~n\in \mathbb{N}$. Note that $c_n>0$ for all $n$ (otherwise, by \eqref{eq:3.8} all the $u_n$'s will be constant functions, which is impossible since they belong to $\mathcal{C}\setminus \{ 0\}$).  By \eqref{eq:3.8} we see that 
 \[
 \int_{\Omega} \mid \nabla v_n {\mid}^qdx < \lambda \ \ \ \forall n\in \mathbb{N} \, , 
 \]
so $(v_n)$ is bounded in $W^{1,q}(\Omega)$. As $(v_n)$ is a sequence in $\mathcal{C}$ which is weakly closed in $W^{1,q}(\Omega)$, it follows that there exists a $v_0\in \mathcal{C}$ such that, on a subsequence, $v_n \rightharpoonup v_0$ in $W^{1,q}(\Omega)$ and $v_n \rightarrow v_0$ in $L^q(\Omega)$ as well as in $L^q(\partial \Omega)$. Now, we divide \eqref{eq:3.8} by $c_n^q$ to obtain 
\[
\int_{\Omega}\mid \nabla v_n \, {\mid}^p dx = c_n^{q-p}\big[ \lambda - \int_{\Omega} \mid \nabla v_n \, {\mid}^q dx \big] \rightarrow 0 \, . 
\]
Next, since $v_n \rightharpoonup v_0$ in $W^{1,q}(\Omega)$ (hence also in $W^{1,p}(\Omega)$), we infer that 
\[
\int_{\Omega}\mid \nabla v_0 \, {\mid}^p dx \leq \underset{n\rightarrow\infty}{\liminf}\int_\Omega\mid \nabla v_n\mid ^{p}~dx=0.
\]
Therefore $v_0$ is a constant function and in fact $v_0 \equiv 0$ since $v_0\in \mathcal{C}$. Thus, $v_n \rightarrow 0$ in both $L^q(\Omega)$ and $L^q(\partial \Omega)$. But this contradicts the fact that 
$$
\parallel a^{1/q}v_n\parallel_{L^q(\Omega)}^q+\parallel b^{1/q}v_n\parallel_{L^q(\partial\Omega)}^q=1~\forall ~n\in \mathbb{N}.
$$
This contradiction shows that $m_\lambda >0$. 

\bigskip

\textbf{Step 4.} There exists $u_*\in \mathcal{N}_\lambda$ such that $\mathcal{J}_\lambda(u_*)=m_\lambda.$

Let $(u_n)\subset\mathcal{{N}}_\lambda$  be a minimizing sequence: $\mathcal{{J}}_\lambda(u_n)\rightarrow m_\lambda.$ By Step 3 $(u_n)$ is bounded in $W^{1,q}(\Omega)$. Thus, on a subsequenc, 
$(u_n)$ converges weakly in $W^{1,q}(\Omega)$ to some $u_* \in W^{1,q}(\Omega)$ and strongly in both $L^{q}(\Omega)$ and $L^{q}(\partial\Omega)$ (to the same $u_*$). Thus, 
\begin{equation}\label{eq:3.9}
\mathcal{{J}}_\lambda(u_*)\leq \underset{n\rightarrow\infty}{\liminf} \mathcal{{J}}_\lambda(u_n)=m_\lambda.
\tag{3.9}
\end{equation}
As $(u_n)\subset\mathcal{{N}}_\lambda$ we have 
\begin{equation}\label{eq:3.10}
\int_\Omega \big(\mid \nabla u_n\mid ^{p}+\mid \nabla u_n\mid ^{q}\big)~dx
=\lambda\big(\int_\Omega a\mid  u_n\mid ^{q} ~dx+\int_{\partial\Omega} b \mid  u_n\mid ^{q} ~d\sigma\big),
\tag{3.10}
\end{equation}
\begin{equation}\label{eq:3.11}
\int_\Omega a\mid  u_n\mid ^{q-2} u_n  ~dx+\int_{\partial\Omega} b \mid  u_n\mid ^{q-2} u_n  ~d\sigma=0~\forall~\in \mathbb{N}.
\tag{3.11}
\end{equation}
It is easily seen that $u_*$ is not the null function. Indeed, assuming that $u_*\equiv 0$, we infer by \eqref{eq:3.10} that $(u_n)$ converges strongly to $0$ in $W^{1,q}(\Omega)$, hence also 
in $W^{1,p}(\Omega)$. Then \eqref{eq:3.6} will give $m_\lambda=0$ thus contradicting the statement of Step 3. Obviously $u_*\in \mathcal{{C}}\setminus \{0\}.$ Letting $n\rightarrow \infty$ in \eqref{eq:3.10} yields
\begin{equation}\label{eq:3.12}
\int_\Omega \big(\mid \nabla u_*\mid ^{p}+\mid \nabla u_*\mid ^{q}\big)~dx
\leq\lambda\big(\int_\Omega a\mid  u_*\mid ^{q} ~dx+\int_{\partial\Omega} b \mid  u_*\mid ^{q} ~d\sigma\big).
\tag{3.12}
\end{equation}
If \eqref{eq:3.12} holds with equality then we are done (cf. \eqref{eq:3.9}). We shall prove that assuming strict inequality in \eqref{eq:3.12} leads to a contradiction. Thus, let us assume that 
\begin{equation}\label{eq:3.13}
\int_\Omega \big(\mid \nabla u_*\mid ^{p}+\mid \nabla u_*\mid ^{q}\big)~dx
<\lambda\Big(\int_\Omega a\mid  u_*\mid ^{q} ~dx+\int_{\partial\Omega} b \mid  u_*\mid ^{q} ~d\sigma\Big).
\tag{3.13}
\end{equation}
If we choose $t$ as in \eqref{eq:3.5} with $u_*$ instead of $v_0$, we have $t u_*\in \mathcal{N}_\lambda$ with $t\in (0,1)$. Next, using the form of $\mathcal{J}_\lambda$ on the Nehari manifold 
$\mathcal{{N}}_\lambda$, we get
\begin{equation}\label{eq:3.14}
\mathcal{J}_\lambda(t u_*)=\frac{q-p}{qp}\int_\Omega \mid \nabla t u_*\mid ^{p}~dx=t^p\frac{q-p}{qp}\int_\Omega \mid \nabla u_*\mid ^{p}~dx.
\tag{3.14}
\end{equation}
In addition,
\[
\mathcal{J}_\lambda( u_n)=\frac{q-p}{qp}\int_\Omega \mid \nabla u_n\mid ^{p}~dx\Rightarrow m_\lambda=\underset{n\rightarrow\infty}{\lim}{\mathcal{{J}}_\lambda (u_n)}\geq\frac{q-p}{qp}\int_\Omega \mid \nabla u_*\mid ^{p}~dx.
\]
Therefore,
\[
0<m_\lambda\leq \mathcal{J}_\lambda( t u_*) =t^p\frac{q-p}{qp}\int_\Omega \mid \nabla u_*\mid ^{p}~dx\leq t^p \underset{n\rightarrow\infty}{\lim}{\mathcal{{J}}_\lambda (u_n)}=t^p m_\lambda<m_\lambda,
\]
which is impossible.

\bigskip

\textbf{Step 5.} If $u_*\in \mathcal{N}_\lambda$ is the minimizer determined in Step 4, then $\mathcal{J}'_\lambda( u_*)=0.$

In fact $u_*$  is a solution of the minimization problem
\[
\min_{v\in W} \mathcal{J}_\lambda(v),
\]
with the restrictions 
\begin{equation}\label{eq:3.15}
g_1(v):=\int_\Omega \big(\mid \nabla v\mid ^{p}+\mid \nabla v\mid ^{q}\big)~dx
-\lambda\Big(\int_\Omega a\mid  v\mid ^{q} ~dx+\int_{\partial\Omega} b \mid  v\mid ^{q} ~d\sigma\Big)=0,
\tag{3.15}
\end{equation}
\begin{equation}\label{eq:3.16}
g_2(v):=\int_\Omega a\mid  v\mid ^{q-2} v  ~dx+\int_{\partial\Omega} b \mid  v\mid ^{q-2} v  ~d\sigma=0.
\tag{3.16}
\end{equation}
We shall use again Lemma \ref{lema2}, this time with $X=W, \ Y=\mathbb{R}^2, D=W\setminus \{0\}, \ f=\mathcal{J}_\lambda,$  $g=(g_1,g_2)$ where 
$g_1, \, g_2$ are defined above,  
$x_0=u_*$. All the assumptions of Lemma \ref{lema2} can be checked easily, except the fact that $g'(u_*)$ has closed range. In fact we shall prove more, that $g'(u_*)$ is surjective, i.e., $\forall~~ (c_1, c_2)\in \mathbb{R}^2$ 
there exists a $w\in W$ such that
\[
\langle g_1'(u_*),w\rangle=c_1, ~~\langle g_2'(u_*),w\rangle=c_2.
\]
 If we choose in the above equations $w$ of the form $w=\alpha u_*+\beta,~ \alpha,\beta \in \mathbb{\mathbb{R}} $ and take into account the 
fact that $u_*\in \mathcal{N}_\lambda$, we obtain the following algebraic system
\[
\alpha(p-q)\int_\Omega \mid \nabla u_*\mid ^{p}dx=c_1,~~\beta(q-1)\Big(\int_\Omega a\mid  u_*\mid ^{q-2} ~dx+
\int_{\partial\Omega} b \mid  u_*\mid ^{q-2}   ~d\sigma\Big)=c_2,
\]
which has a unique solution $(\alpha , \beta)$ (from Remark~\ref{remark11} and $u_*\in \mathcal{N}_\lambda$ we see that the coefficients of $\alpha$ and 
$\beta$ are $\neq 0$). Thus $g'(u_*) $ is indeed surjective and so Lemma \ref{lema2} is applicable to the above constraint minimization problem. 
Therefore there exist $\lambda^* \in \mathbb{R}$ and $(\mu_1, \mu_2)\in \mathbb{R}^2$, not both equal to zero, such that 
\[
\lambda^*\langle \mathcal{{J}}'_\lambda(u_*), v\rangle+\mu_1\langle g'_1(u_*), v\rangle+\mu_2\langle g'_2(u_*), v\rangle=0,~~\forall~v\in  W^{1,q}(\Omega),
\]
or, equivalently, 
\begin{equation*}
\begin{split}
\lambda^*\biggl[\int_\Omega \Big(\mid \nabla u_*\mid ^{p-2}&+\mid \nabla u_*\mid ^{q-2}\Big)\nabla u_* \cdot \nabla v~dx -
\lambda\Big(\int_\Omega a\mid  u_*\mid ^{q-2} u_*  v~dx+\int_{\partial\Omega} b \mid  u_*\mid ^{q-2} u_*  v~d\sigma\Big)\biggr]\\
&+\mu_1\biggl[p\int_\Omega \mid \nabla u_*\mid ^{p-2}\nabla u_* \cdot \nabla v~dx+q \int_\Omega\mid\nabla u_*\mid ^{q-2}\nabla u_* \cdot \nabla v~dx \\
&-q\lambda\Big(\int_\Omega a\mid  u_*\mid ^{q-2} u_*  v~dx+\int_{\partial\Omega} b \mid  u_*\mid ^{q-2} u_*  v~d\sigma\Big)\biggr]\\
&+\mu_2(q-1)\biggl[\int_\Omega a\mid  u_*\mid ^{q-2}   v~dx+\int_{\partial\Omega} b \mid  u_*\mid ^{q-2}   v~d\sigma\biggr]=0~~\forall v\in W.
\end{split}
\end{equation*}
Testing with $v\equiv 1$ in the above equation and taking into account the fact that $u_*\in \mathcal{N}_\lambda$ we find
\[
\mu_2(q-1)\biggl[(\int_\Omega a\mid  u_*\mid ^{q-2}  ~dx+\int_{\partial\Omega} b \mid  u_*\mid ^{q-2}  ~d\sigma\bigr]=0,
\]
which implies $\mu_2=0$ (since the coefficient of $\mu_2$ in the above equation $\neq 0$; see \eqref{eq:3.15} with $v=u_*$ and Remark~\ref{remark11}). 

Next, we test with $v=u_*$ and use \eqref{eq:3.16} with $v=u_*$ to obtain 
\[
\mu_1(p-q)\int_\Omega \mid \nabla u_*\mid ^{p}~dx=0,
\]
 which implies $\mu_1=0$. Therefore, $\lambda^*\neq 0$, hence
\[
\int_\Omega \Big(\mid \nabla u_*\mid ^{p-2}+\mid \nabla u_*\mid ^{q-2}\Big)\nabla u_* \cdot \nabla v~dx =\lambda\Big(\int_\Omega a\mid  u_*\mid ^{q-2} u_*  v~dx+\int_{\partial\Omega} b \mid  u_*\mid ^{q-2} u_*  v~d\sigma\Big),
\]
$\forall~v\in W$, i. e.  $\lambda$ is indeed an eigenvalue of problem \eqref{eq:1.1}.
This completes the proof of the theorem.
\end{proof}

\begin{rem}
Assume that $(H_{\Omega})$, $(H_{ab})$ are fulfilled and $q\ge 2$. We can show that, if in addition $1<p<q$, 
then $\lambda_1$ (defined in \eqref{eq:2.3}) is in fact the first positive eigenvalue of the eigenvalue problem 
\begin{equation}\label{eq:3.17}
\left\{\begin{array}{l}
-\Delta_q u=\lambda a(x) \mid u\mid ^{q-2}u\hspace*{1cm} \ \mbox{in} ~ \Omega,\\[1mm]
\mid \nabla u\mid ^{q-2}\frac{\partial u}{\partial\nu}=\lambda b(x) \mid u\mid ^{q-2}u ~~~~~~ \mbox{on} ~ \partial\Omega.
\end{array}\right.
\tag{3.17}
\end{equation}
As in the case of problem \eqref{eq:1.1}, $\lambda \in \mathbb{R}$ is called an eigenvalue of problem \eqref{eq:3.17} if there exists 
$u_\lambda \in W^{1,q}(\Omega) \setminus \{0\}$ such that
\begin{equation}\label{eq:3.18}
\int_\Omega \mid \nabla u_\lambda\mid ^{q-2}\nabla u_\lambda  \nabla v~dx =\lambda \Big(\int_\Omega a\mid  u_\lambda\mid ^{q-2} u_\lambda  v~dx+
\int_{\partial\Omega} b \mid  u_\mu\mid ^{q-2} u_\mu  v~d\sigma\Big)~~\forall v\in W^{1,q}(\Omega).
\tag{3.18}
\end{equation}
Obviously, $\lambda_0=0$ is an eigenvalue of \eqref{eq:3.17} and any other eigenvalue of this problem is positive (cf. \eqref{eq:3.18} with 
$v=u_\lambda$). For $q\geq 2,$ we can use Lemma~\ref{lema2} to show that the first positive eigenvalue of \eqref{eq:3.17} is given by 
\begin{equation}\label{eq:3.19}
\lambda_{1q}:=\underset{v\in\mathcal{C}_q\setminus\{0\}}{\inf }~\frac{\int_\Omega\mid\nabla v\mid^q~dx}{\int_{\Omega}a\mid v\mid^q~dx+
\int_{\partial\Omega}b\mid  v\mid^q~d\sigma}.
\tag{3.19}
\end{equation}
 First of all we see that there is no eigenvalue of \eqref{eq:3.17} 
in the interval $(0,\lambda_{1q})$. Assume the contrary, that there exists a $\lambda \in (0, \lambda_{1q})$ which is an eigenvalue and 
let $u_\lambda\in \mathcal{C}_q\setminus\{0\}$ be a correspunding eigenfunction. If we choose in \eqref{eq:3.18}
 $v=u_{\lambda}$ we get
\begin{equation}\label{eq:3.20}
\int_\Omega \mid \nabla u_\lambda\mid ^{q}~dx=\mu\Big(\int_\Omega a\mid  u_\lambda \mid ^{q}~dx+
\int_{\partial\Omega} b \mid  u_\lambda \mid ^{q} ~d\sigma\Big).
\tag{3.20}
\end{equation}
As $u_\lambda \in \mathcal{C}_q\setminus\{0\}$,  we have (see \eqref{eq:3.19})
\begin{equation*}
\lambda< \lambda_{1q}\leq \frac{\int_\Omega\mid\nabla u_\lambda\mid^q~dx}{\int_{\Omega}a\mid
 u_\lambda \mid^q~dx+\int_{\partial\Omega}b\mid u_\lambda\mid^q~d\sigma}=\lambda,
\end{equation*}
contradiction. Now, let us prove that $\lambda_{1q}$ is an eigenvalue of \eqref{eq:3.17}. We know from Lemma~\ref{lema1} that there exists $u^*\in 
\mathcal{C}_{1q}\setminus \{ 0\}$ such that 
$$
\lambda_{1q}=J(u^*)=\underset{v\in\mathcal{C}_{1q}}{\min }~J(v).
$$
We can prove that $J'(u^*)=0$. To this purpose we apply Lemma ~\ref{lema2} to problem \eqref{eq:2.2} with the constraints:
\begin{equation}\label{eq:3.21}
h_1(v)=\int_\Omega a\mid  v\mid ^{q} ~dx+\int_{\partial\Omega} b \mid  v\mid ^{q} ~d\sigma-1=0,
\tag{3.21}
\end{equation}
\begin{equation}\label{eq:3.22}
h_2(v)=\int_\Omega a\mid  v\mid ^{q-2} v  ~dx+\int_{\partial\Omega} b \mid  v\mid ^{q-2} v  ~d\sigma=0.
\tag{3.22}
\end{equation}
Choose $X=W^{1,q}(\Omega), \ Y=\mathbb{R}^2, \ D=X, \ f=J, \ g=(h_1, h_2), \ x_0=u^*$. One can show by arguments similar to those used before that 
$g'(u^*)$ is surjective, so all the requirements of Lemma \ref{lema2} are fulfilled. So there exist $\lambda^* \in \mathbb{R}$, 
$(\mu_1, \mu_2) \in \mathbb{R}^2$, 
not both equal to zero such that
\begin{equation}\label{eq:3.23}
\begin{split}
\lambda^*q\int_\Omega &\mid \nabla u^*\mid ^{q-2}\nabla u^*  \nabla v~dx+\mu_{1}q \Big(\int_\Omega a\mid  u^*\mid ^{q-2}  u^* v~dx
+\int_{\partial\Omega} b \mid  u^*\mid ^{q-2} u^*   v~d\sigma\Big)
 \\ &+\mu_2(q-1)\Big(\int_\Omega a\mid  u^*\mid ^{q-2}   v~dx+\int_{\partial\Omega} b \mid  u^*\mid ^{q-2}   v~d\sigma\Big)=0~~\forall v\in W^{1,q}(\Omega),
\end{split}
\tag{3.23}
\end{equation}
Testing with $v=1$ in \eqref{eq:3.23} and observing that $u^*$ satisfies \eqref{eq:3.22}, we deduce that $\mu_2=0.$ Finally, chosing $v=u^*$ in 
\eqref{eq:3.23} and noting that $u^*$ satisfies \eqref{eq:3.21} we find $\lambda^* \lambda_{1q}+\mu_1=0$, where $\mu_1 \neq 0$, 
$\lambda^* \neq 0$. Replacing $\mu_1=-\lambda^* \lambda_{1q},~\mu_2=0$ in \eqref{eq:3.23}, we get 
$$
\int_\Omega \mid \nabla u^*\mid ^{q-2}\nabla u^*  \nabla v~dx - \lambda_{1q}\Big(\int_\Omega a\mid  u^*\mid ^{q-2}  u^* v~dx
+\int_{\partial\Omega} b \mid  u^*\mid ^{q-2} u^*   v~d\sigma\Big) \ \ \forall v \in W^{1,q}(\Omega), 
$$
i. e., $(\lambda_{1q}, u^*)$ is an eigenpair of problem \eqref{eq:3.17}. 

Thus, if $q\ge 2$ and $1<p<q$ then $\lambda_1 = \lambda_{1q}$, so the eigenvalue set of problem \eqref{eq:1.1} is $\{ 0\}\cup (\lambda_{1q}, \infty)$, 
which is independent of $p$. If $2\le q<p$ then $\lambda_1\ge \lambda_{1q}$.

\end{rem}


\begin{thebibliography}{}

\bibitem{AM} J. Abreu, G. Madeira, Generalized eigenvalues of the $(p,2)$-Laplacian under a parametric boundary condition, ArXiv:1507.03299v2 [math.AP] 23 Mar 2016.

\bibitem{Br} H. Brezis, Functional Analysis, Sobolev Spaces and Partial Differential
Equations, Springer, 2011.

\bibitem{CF} E. Casas, L. A. Fern\'{a}ndez, A Green's formula for quasilinear elliptic operators, J. Math. Anal. Appl., 142, 62-73, 1989.

\bibitem{DMP} Z. Denkowski, S. Mig\'{o}rski, N. S. Papageorgiou, An Introduction to Nonlinear
Analysis: Theory, Springer, New York, 2003.

\bibitem{FMS} M. F\u{a}rc\u{a}\c{s}eanu, M. Mih\u{a}ilescu, D. Stancu-Dumitru, On the set of eigen-
values of some PDEs with homogeneous Neumann boundary condition, Nonlinear Analysis,  116, 19-25, 2015.


\bibitem{Le} A. L\^{e}, Eigenvalue problems for p-Laplacian, Nonlinear Analysis, 64,1057-1099, 2006.

\bibitem{GP}  L. Gasinski, N. S. Papageorgiou, Nonlinear Analysis, Chapman and Hall/CRC
Taylor and Francis Group, Boca Raton, 2005.

\bibitem{MMih} M. Mih\u{a}ilescu, An eigenvalue problem possesing a continuous family of eigenvalues plus an isolated eigenvale, Commun. Pure
Appl. Anal. 10, 701-708, 2011.

\bibitem{MM} M. Mih\u{a}ilescu, G. Moro\c{s}anu, Eigenvalues of $-\triangle_p-\triangle_q$ under Neumann
boundary condition, Canadian Math. Bull., 59(3), 606-616, 2016.

\bibitem{papa} N. S. Papageorgiou, S. Th. Kyritsi-Yiallourou, Handbook on Applied Analysis. Advances in Mechanics and Mathematics, 19, Springer,
New York, 2009.

\bibitem{St}  M. Struwe, Variational Methods: Applications to Nonlinear Partial Differential Equations and Hamiltonian Systems,
Springer, 1996.

\bibitem{SW} A. Szulkin, T. Weth, The Method of Nehary Manifold, Handbook of
Nonconvex Analysis and Applications, Int. Press, Somerville, MA, 597-632, 2010.


\end{thebibliography}
\end{document}